\def\leq{\leqslant}
\def\geq{\geqslant}
\DeclareMathOperator{\meas}{meas}
\theoremstyle{plain}
\newtheorem{remark}{Remark}
\newtheorem{prop}{Proposition}
\newtheorem{theorem}{Theorem}
\begin{document}
\title{On the constancy of the extremal function \\ in the embedding theorem of fractional order}

\author{Nikita Ustinov\footnote{St.Petersburg State University, 7/9 Universitetskaya Emb., St.Petersburg 199034, Russia. E-mail: {ustinns@yandex.ru}.  Supported by RFBR grant 20-01-00630A}}

\date{}

\maketitle

{\footnotesize
\noindent
{\bf Abstract.} We consider the problem of the minimizer constancy in the fractional embedding theorem $\mathcal{H}^s(\Omega) \hookrightarrow L_q(\Omega)$ for a bounded Lipschitz domain $\Omega,$ depending on the domain size. For the family of domains $\varepsilon \Omega,$ we prove that for small dilation coefficients $\varepsilon$ a unique minimizer is constant, whereas for large~$\varepsilon$ a constant function is not even a local minimizer. We also discuss whether a constant function is a global minimizer if it is a local one.
\medskip

\noindent
{\bf Keywords:} {Fractional Laplace operators, Constancy of the minimizer, Spectral Dirichlet Laplacian.}

\medskip
\noindent
{\bf 2010 Mathematics Subject Classfication:} 35R11, 47J30, 34A08, 34A05}

\section{Introduction}
Let $n \geq 1$ and let $\Omega \subset \mathbb{R}^n$ be a bounded domain with Lipschitz boundary. Assume that $s \in (0, 1),$ and
\begin{equation*}
q \in
\begin{cases}
[1, 2^*_{s}]& \text{for $n \geq 2$ \  or \ $n=1,$ $s < \frac{1}{2};$} \\
[1, \infty) & \text{for $n= 1,$ $s = \frac{1}{2};$} \\
[1, \infty] & \text{for $n= 1,$ $s > \frac{1}{2}.$}
\end{cases}
\end{equation*}
($2^*_{s} := 2n/(n-2s)$ stands for the critical Sobolev exponent). We consider the continuous embedding of the Sobolev--Slobodetskii space $\mathcal{H}^s(\Omega)$ (see \cite{slob} and \cite[\S 2.3.3]{Triebel}) into $L_q(\Omega):$
\begin{equation}
\label{emb_theorem_main}
\inf_{u \in \mathcal{H}^s(\Omega)} \mathcal{I}^{\, \Omega}_{s, q}[u] := \inf_{u \in \mathcal{H}^s(\Omega)} \frac{\| u \|^2_{\mathcal{H}^s(\Omega)}}{\| u \|^2_{L_q(\Omega)}} > 0.
\end{equation}
This embedding is compact for $q \in [1, 2^*_{s});$ therefore, the extremal (minimizer of the functional~$\mathcal{I}^{\, \Omega}_{s, q}[u]$) exists, and the sharp constant in \eqref{emb_theorem_main} is attained. Moreover, in \cite{Ustinov_Neum} it was shown that in case $q = 2^*_{s}$ the extremal in \eqref{emb_theorem_main} exists in $\mathcal{C}^2$--smooth domain $\Omega$ for $n \geq 3,$ $2s > 1.$

Obviously, the properties of extremals in \eqref{emb_theorem_main} depend on the shape of the domain $\Omega,$ on its size and on the norm in $\mathcal{H}^s(\Omega).$ We define 
\begin{equation}
\label{H_s_norm}
\| u \|^2_{\mathcal{H}^s(\Omega)} := \langle (-\Delta)_{Sp}^s u, u \rangle  + \| u \|^2_{L_2(\Omega)},
\end{equation}
where the quadratic form $\langle (-\Delta)_{Sp}^s u, u \rangle$ is defined by
\begin{equation}
\label{seminorm_Sp}
\langle(-\Delta)_{Sp}^s u,u\rangle := \sum\limits_{j=1}^{\infty} \lambda_j^s \cdot ( u,\phi_j )_{L_2(\Omega)}^2.
\end{equation}
Here $\lambda_j$ and $\phi_j$ are eigenvalues and eigenfunctions (orthonormalized in $L_2(\Omega)$) of the Neumann Laplacian in $\Omega,$ respectively (we assume that $\lambda_0 = 0$ for $\phi_0 = C$).

The operator $(-\Delta)_{Sp}^s,$ generated by \eqref{seminorm_Sp}, is the $s$-th power of the conventional Neumann Laplacian in the sense of spectral theory. It is called the \textit{Spectral Neumann fractional Laplacian}.

The domain of the form $\langle(-\Delta)_{Sp}^s u,u\rangle$ coincides with the space $\mathcal{H}^s(\Omega),$ and the norm~\eqref{H_s_norm} is equivalent to the standard norm in $\mathcal{H}^s(\Omega)$ (the proof of these facts is similar to the proof of Lemma 1 in \cite{MN2}).

In what follows, we assume that $\meas (\Omega) = 1,$ and consider the embedding \eqref{emb_theorem_main} for the set of domains $\Omega_{\varepsilon} := \{ \varepsilon x \  | \ x \in \Omega \}.$ We put $u_{\varepsilon}(y) := u(\varepsilon^{-1}y),$ then
\begin{equation}
\label{fu_s_omega_varepsilon}
\mathcal{I}^{\varepsilon}_{s, q}[u]
:=
\frac{\mathcal{I}^{\,\Omega_{\varepsilon}}_{s, q}[u_{\varepsilon}]}{\varepsilon^{n-2s - \frac{2n}{q}}}
=
\frac{ \varepsilon^{n-2s}  \langle (-\Delta)_{Sp}^s u, u \rangle  + \varepsilon^{n} \| u \|^2_{L_2(\Omega)}}{\varepsilon^{n-2s - \frac{2n}{q}} \varepsilon^{\frac{2n}{q}} \| u \|^2_{L_q(\Omega)}}
=
 \frac{\langle (-\Delta)_{Sp}^s u, u \rangle  + \varepsilon^{2s} \| u \|^2_{L_2(\Omega)}}{\| u \|^2_{L_q(\Omega)}}.
\end{equation}
The functionals $\mathcal{I}^{\varepsilon}_{s, q}[u]$ and $\mathcal{I}^{\, \Omega}_{s, q}[u]$ are invariant under the multiplications of the function $u$ by a constant. Therefore we can assume that $\| u \|^2_{L_q(\Omega)} = 1.$

In the local case $s = 1$ the following facts about the minimizer in \eqref{fu_s_omega_varepsilon} are known (recall that $\mathcal{H}^1(\Omega)$ coincides with the Sobolev space $W^1_2(\Omega)$ and the norm \eqref{H_s_norm} coincides with the standard one):
\begin{enumerate}
\item If $q \in [1,2],$ then for any $\varepsilon > 0$ the constant function $\mathbf{1}$ (identically equal to one) is a unique minimizer of the functional $\mathcal{I}^{\varepsilon}_{1, q}[u];$
\item If $q \in (2,2^*_{1}]$ and $\lambda_1$ is the first nonzero eigenvalue of the Neumann Laplacian in $\Omega,$ then
\begin{itemize}
\item for $\varepsilon >\varepsilon_1(q) := \sqrt{\lambda_1 / (q-2)}$ the constant function $\mathbf{1}$ is not a \textit{local} minimizer of the functional $\mathcal{I}^{\varepsilon}_{1, q}[u]$ (see \cite[Proposition 3.1]{Naz_Sch}). Obviously, in this case the constant function~$\mathbf{1}$ is not a \textit{global} minimizer;
\item for $\varepsilon < \varepsilon_1(q)$ the constant function $\mathbf{1}$ gives a \textit{local} minimum for the functional $\mathcal{I}^{\varepsilon}_{1, q}[u]$ (see \cite[Proposition 3.2]{Naz_Sch});
\item there exists such $\mathcal{E}_1(q) > 0,$ that for all $\varepsilon \leq \mathcal{E}_1(q)$ the constant function $\mathbf{1}$ gives a \textit{global} minimum for the functional $\mathcal{I}^{\varepsilon}_{1, q}[u],$ whereas for all $\varepsilon > \mathcal{E}_1(q)$ the constant function $\mathbf{1}$ is not a \textit{global} minimizer for the functional $\mathcal{I}^{\varepsilon}_{1, q}[u]$ (see \cite[Propositions 3.5--3.7]{Naz_Sch});
\item the function $\mathcal{E}_1(q)$ is continuous and monotonically decreasing (see \cite[Theorem 3.8]{Naz_Sch}); for $n = 1$ one has $\mathcal{E}_1(q) = \varepsilon_1(q)$ (see \cite{Naz2}, \cite{Naz1}); for $n \geq 2$ there exist both examples of domains with $\mathcal{E}_1(q) < \varepsilon_1(q)$ (see \cite[Corollary 3.4]{Naz_Sch}, \cite[(3.6)]{Naz_Sch}) and examples of domains with $\mathcal{E}_1(q) = \varepsilon_1(q)$ (see \cite[Theorem 3.1]{Sch}).
\end{itemize}
\end{enumerate}
In this paper, we study similar questions for the functional~\eqref{fu_s_omega_varepsilon} in the nonlocal case, provided that its minimizer exists. Standard variational argument shows that for $q < \infty$ the minimizer (if it exists) solves the following problem (up to multiplication by a constant)
 \begin{equation}
 \label{neumann_pr_s}
(-\Delta)_{Sp}^s u + u = |u|^{q - 2} u, \quad u\in \mathcal{H}^s(\Omega).
\end{equation}

\begin{remark}
From \eqref{seminorm_Sp} it follows that $(-\Delta)_{Sp}^s \mathbf{1} = 0$ and $\mathcal{I}^{\varepsilon}_{s, q}[\mathbf{1}] = \varepsilon^{2s}.$Thus, the function $u=\mathbf{1}$ is always a solution to the problem \eqref{neumann_pr_s}.
\end{remark}

\begin{remark}
For $q \in [1, 2]$ the H{\"o}lder inequality provides $\| u \|^2_{L_q(\Omega)} \leq \| u \|^2_{L_2(\Omega)},$ therefore $\mathcal{I}^{\varepsilon}_{s, q}[u] \geq  \varepsilon^{2s} = \mathcal{I}^{\varepsilon}_{s, q}[\mathbf{1}].$ Since the H{\"o}lder inequality is strict for any non-constant function $u,$ the constant function is the only minimizer of the functional $\mathcal{I}^{\varepsilon}_{s, q}[u]$ for any $s \in (0, 1].$
\end{remark}

\begin{remark}
We can assume that the minimizer of the functional \eqref{fu_s_omega_varepsilon} is non-negative. Indeed, the substitution $u \to |u|$ decreases the value of the functional $\mathcal{I}^{\varepsilon}_{s, q}[u].$ Namely, the norms in $L_2$ and $L_q$ are invariant under such substitution, whereas the quadratic form $\langle(-\Delta)_{Sp}^s u,u\rangle$ decreases for any sign-changing function $u$
\begin{equation*}
 \langle(-\Delta)_{Sp}^s u,u\rangle  > \langle(-\Delta)_{Sp}^s |u|, |u|\rangle,
\end{equation*}
(see the proof of Theorem 3 in \cite{MN4}).
\end{remark}

In what follows, we assume that $q > 2,$ and all minimizers of the functionals $\mathcal{I}^{\varepsilon}_{s, q}[u]$ are non-negative.

\medskip

We use the letter $C$ to denote various positive constants, the exact meaning of which is not important for us. To indicate that $C$ depends on some other parameters, we write $C(\dots).$ We also omit the dependence of $C$ on the domain $\Omega.$

\section{The main result}
Let us define an auxiliary functional
\begin{equation*}
\mathcal{J}^{\varepsilon}_{s, q}[u] 
:= 
\| u \|^2_{L_q(\Omega)} \cdot \left( \mathcal{I}^{\varepsilon}_{s, q}[u] - \mathcal{I}^{\varepsilon}_{s, q}[\mathbf{1}] \right)
=
\langle (-\Delta)_{Sp}^s u, u \rangle  + \varepsilon^{2s} \| u \|^2_{L_2(\Omega)} - \varepsilon^{2s} \| u \|^2_{L_q(\Omega)},
\end{equation*}
it follows from the construction that the constant function $\mathbf{1}$ is a minimizer of the functional~$\mathcal{I}^{\varepsilon}_{s, q}[u]$ if and only if the functional $\mathcal{J}^{\varepsilon}_{s, q}[u]$ is non-negative (obviously,  $\mathcal{J}^{\varepsilon}_{s, q}[\mathbf{1}] = 0$).

Let us investigate the nonnegativity of the functional $\mathcal{J}^{\varepsilon}_{s, q}[u].$ Its first differential is
\begin{equation*}
\mathbf{D} \mathcal{J}^{\varepsilon}_{s, q}[u; h]
=
2 \Bigl( \langle (-\Delta)_{Sp}^s u, h \rangle  + \varepsilon^{2s} \int\limits_{\Omega} uh \, dx - \varepsilon^{2s} \| u \|^{2 - q}_{L_q(\Omega)} \int\limits_{\Omega} u^{q - 1} h \, dx \Bigr).
\end{equation*}
It is easy to see that $\mathbf{D} \mathcal{J}^{\varepsilon}_{s, q}[\mathbf{1}; h] \equiv 0.$ We also compute the second differential:
\begin{multline}
\label{J_second_derivative}
\mathbf{D}^2 \mathcal{J}^{\varepsilon}_{s, q}[u; h, h]
=
2 \langle (-\Delta)_{Sp}^s h, h \rangle  + 2 \varepsilon^{2s} \| h \|^2_{L_2(\Omega)} 
+  
2(q-2) \varepsilon^{2s} \| u \|^{2(1- q)}_{L_q(\Omega)} \Bigl( \int\limits_{\Omega} u^{q - 1} h \, dx \Bigr)^2 
\\-
2(q-1) \varepsilon^{2s} \| u \|^{2 - q}_{L_q(\Omega)} \int\limits_{\Omega} u^{q - 2} h^2 \, dx.
\end{multline}
We decompose the function $h$ into the sum of terms orthogonal in $L_2(\Omega):$
\begin{equation}
\label{mean_rep}
h(x) = \int\limits_{\Omega} h \, dx + \widehat{h}, \quad \int\limits_{\Omega} \widehat{h} \, dx = 0;
\end{equation}
then for $u = \mathbf{1}$ the second differential \eqref{J_second_derivative} takes the form
\begin{equation*}
\mathbf{D}^2 \mathcal{J}^{\varepsilon}_{s, q}[\mathbf{1}; h, h]
=
2 \langle (-\Delta)_{Sp}^s \widehat{h}, \widehat{h} \rangle  
-
2 (q-2) \varepsilon^{2s} \|\widehat{h}\|^2_{L_2(\Omega)}
=
2\sum\limits_{j=1}^{\infty} \left(\lambda_j^s - (q-2)\varepsilon^{2s} \right) \cdot  (\widehat{h},\phi_j )_{L_2(\Omega)}^2.
\end{equation*}
We denote $\varepsilon_s (q) := (\lambda^s_1/(q-2))^{1/(2s)}$ (obviously, $\varepsilon_s(q) = \varepsilon_1(q)$ for $s=1$).
\begin{theorem}
\label{local_min}
\begin{enumerate}
\item Let $\varepsilon > \varepsilon_s(q).$ Then the constant function $\mathbf{1}$ is not a local minimizer of the functional $\mathcal{I}^{\varepsilon}_{s, q}[u]$ (consequently, the constant function $\mathbf{1}$ is not a global minimizer). This statement remains valid for $q = \infty$ (in this case $\varepsilon_s(q) = 0$).
\item Let $\varepsilon < \varepsilon_s(q).$ Then the constant function $\mathbf{1}$ is a local minimizer of the functional~$\mathcal{I}^{\varepsilon}_{s, q}[u].$
\item Let $\varepsilon = \varepsilon_s(q)$ and $\int\limits_{\Omega} \phi^3_1 \, dx \neq 0.$ Then the constant function $\mathbf{1}$ is not a local minimizer of the functional $\mathcal{I}^{\varepsilon_s}_{s, q}[u].$ Moreover, there exists $\breve{\varepsilon}(q) < \varepsilon_s(q),$ such that for $\varepsilon \in (\breve{\varepsilon}(q), \varepsilon_s(q)]$ the constant function $\mathbf{1}$ is not a global minimizer of the functional $\mathcal{I}^{\varepsilon}_{s, q}[u]$ (although it gives a local minimum).
\end{enumerate}
\end{theorem}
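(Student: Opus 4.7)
The plan is to analyze $\mathcal{J}^{\varepsilon}_{s,q}$ via Taylor expansion about $\mathbf{1}$ along the first Neumann eigenfunction $\phi_1$ and read off the sign of the leading nonzero term. For Part~1, substituting $h = \phi_1$ into the formula for $\mathbf{D}^2 \mathcal{J}^{\varepsilon}_{s,q}[\mathbf{1};h,h]$ yields $2(\lambda_1^s - (q-2)\varepsilon^{2s}) < 0$ precisely when $\varepsilon > \varepsilon_s(q)$, so $\mathcal{J}$ takes small negative values along $\mathbf{1} + t\phi_1$ and $\mathbf{1}$ is not a local minimizer. For $q = \infty$, where the $L_\infty$-norm is not $C^2$, I observe that $\phi_1$ has zero mean, hence its essential supremum $M$ is strictly positive, so $\|\mathbf{1}+t\phi_1\|_{L_\infty(\Omega)} = 1 + tM$ for small $t > 0$, giving
\begin{equation*}
\mathcal{I}^{\varepsilon}_{s,\infty}[\mathbf{1}+t\phi_1] = \frac{\varepsilon^{2s} + (\lambda_1^s + \varepsilon^{2s})t^2}{(1+tM)^2} = \varepsilon^{2s} - 2\varepsilon^{2s} M\,t + O(t^2) < \varepsilon^{2s}.
\end{equation*}

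For Part~2, by the scale invariance $\mathcal{I}^{\varepsilon}_{s,q}[cu] = \mathcal{I}^{\varepsilon}_{s,q}[u]$ it suffices to test $\mathcal{J}$ on the slice $u = \mathbf{1} + \widehat{h}$ with $\int_\Omega \widehat{h}\,dx = 0$ and $\widehat{h}$ small in $\mathcal{H}^s(\Omega)$. A Taylor expansion of $\|\mathbf{1}+\widehat{h}\|^2_{L_q(\Omega)}$, with the remainder controlled via the embedding $\mathcal{H}^s(\Omega) \hookrightarrow L_q(\Omega)$, gives
\begin{equation*}
\mathcal{J}^{\varepsilon}_{s,q}[\mathbf{1}+\widehat{h}] = \langle (-\Delta)_{Sp}^s \widehat{h}, \widehat{h} \rangle - (q-2)\varepsilon^{2s}\|\widehat{h}\|^2_{L_2(\Omega)} + o(\|\widehat{h}\|^2_{\mathcal{H}^s(\Omega)}).
\end{equation*}
Since $\widehat{h}$ is orthogonal to constants, $\langle (-\Delta)_{Sp}^s \widehat{h}, \widehat{h}\rangle \geq \lambda_1^s \|\widehat{h}\|^2_{L_2(\Omega)}$, so the quadratic head is bounded below by $(\lambda_1^s - (q-2)\varepsilon^{2s})\|\widehat{h}\|^2_{L_2(\Omega)} > 0$ and is in fact coercive in the $\mathcal{H}^s$-norm; this absorbs the remainder for $\|\widehat{h}\|_{\mathcal{H}^s(\Omega)}$ small, yielding $\mathcal{J}^{\varepsilon}_{s,q} \geq 0$ in a neighborhood of $\mathbf{1}$.

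For Part~3, I expand $\mathcal{J}^{\varepsilon}_{s,q}[\mathbf{1}+t\phi_1]$ to third order in $t$. Using
\begin{equation*}
\|\mathbf{1}+t\phi_1\|^2_{L_q(\Omega)} = 1 + (q-1)t^2 + \frac{(q-1)(q-2)}{3}\Bigl(\int_\Omega \phi_1^3\,dx\Bigr)t^3 + O(t^4),
\end{equation*}
I obtain
\begin{equation*}
\mathcal{J}^{\varepsilon}_{s,q}[\mathbf{1}+t\phi_1] = \bigl(\lambda_1^s - (q-2)\varepsilon^{2s}\bigr)t^2 - \varepsilon^{2s}\frac{(q-1)(q-2)}{3}\Bigl(\int_\Omega \phi_1^3\,dx\Bigr)t^3 + O(t^4).
\end{equation*}
At $\varepsilon = \varepsilon_s(q)$ the quadratic term vanishes and the cubic coefficient is nonzero by hypothesis; choosing the sign of $t$ so that the cubic is negative gives $\mathcal{J} < 0$ for small $|t|$, so $\mathbf{1}$ is not a local minimizer. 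For $\varepsilon$ slightly below $\varepsilon_s(q)$, set $a := (q-2)(\varepsilon_s(q)^{2s} - \varepsilon^{2s}) > 0$ (small) and let $b \neq 0$ denote the cubic coefficient; taking $t$ of sign opposite to $b$ and magnitude a suitable multiple of $a/|b|$ makes $at^2 + bt^3$ negative of order $a^3/b^2$, while the $O(t^4)$ remainder is $O(a^4/b^4)$ and negligible for $a$ small. This produces a $\breve{\varepsilon}(q) < \varepsilon_s(q)$ such that $\mathcal{J}^{\varepsilon}_{s,q}[\mathbf{1}+t\phi_1] < 0$ at the chosen $t$ for every $\varepsilon \in (\breve{\varepsilon}(q), \varepsilon_s(q)]$, although by Part~2 the constant $\mathbf{1}$ remains a local minimizer on that range.

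The main technical step will be uniform control of the Taylor remainders: both the $o(\|\widehat{h}\|^2_{\mathcal{H}^s(\Omega)})$ of Part~2 and the $O(t^4)$ of Part~3 must be dominated by the appropriate main term, uniformly in $\varepsilon$ in a neighborhood of $\varepsilon_s(q)$. Both reductions are standard integration estimates given $\phi_1 \in L^\infty(\Omega)$ (from elliptic regularity of Neumann eigenfunctions) together with the Sobolev embedding $\mathcal{H}^s(\Omega) \hookrightarrow L_q(\Omega)$ in the admissible range.
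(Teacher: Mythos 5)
Your proposal is correct and follows essentially the same route as the paper: the sign of $\mathbf{D}^2\mathcal{J}^{\varepsilon}_{s,q}[\mathbf{1};\phi_1,\phi_1]=2(\lambda_1^s-(q-2)\varepsilon^{2s})$ for Parts~1 and~2 (with the spectral gap $\langle(-\Delta)_{Sp}^s\widehat h,\widehat h\rangle\geq\lambda_1^s\|\widehat h\|^2_{L_2(\Omega)}$ and the embedding $\mathcal{H}^s(\Omega)\hookrightarrow L_q(\Omega)$ absorbing the superquadratic remainder), and the cubic term $-\varepsilon^{2s}\tfrac{(q-1)(q-2)}{3}\int_\Omega\phi_1^3\,dx$ for Part~3. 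The only (welcome) additions are your explicit first-order computation for $q=\infty$, which the paper merely asserts, and your quantitative choice $t\sim a/|b|$ producing $\breve\varepsilon(q)$, where the paper simply appeals to continuous dependence on $\varepsilon$.
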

\begin{proof}
1. This statement follows from the inequality $\mathbf{D}^2 \mathcal{J}^{\varepsilon}_{s, q}[\mathbf{1}; \phi_1, \phi_1] < 0.$

\medskip

2. Let $u \in \mathcal{H}^s(\Omega),$ $\| u - \mathbf{1} \|_{\mathcal{H}^s(\Omega)} < \frac{1}{2}.$ We use the decomposition \eqref{mean_rep}: we have $u = c + \widehat{u}$ with $1/2 < c < 3/2,$ and the equality
\begin{equation*}
\mathcal{J}^{\varepsilon}_{s, q}[u] 
=
\langle (-\Delta)_{Sp}^s \widehat{u}, \widehat{u} \rangle  + \varepsilon^{2s} \| \widehat{u} \|^2_{L_2(\Omega)} + c^2\varepsilon^{2s} - c^2 \varepsilon^{2s} \|  \mathbf{1} + \widehat{u}/c \|^2_{L_q(\Omega)}.
\end{equation*}
In \cite[(3.5)]{Naz_Sch} it was shown that
\begin{equation*}
\| \mathbf{1} + \widehat{u}/c \|^2_{L_q(\Omega)} \leq 1 + (q -1) \| \widehat{u}/c \|^2_{L_2(\Omega)} + C(q) \| \widehat{u} \|^{\min(q, 3)}_{L_q(\Omega)},
\end{equation*}
therefore we obtain the inequality
\begin{equation*}
\mathcal{J}^{\varepsilon}_{s, q}[u] 
\geq 
\langle (-\Delta)_{Sp}^s \widehat{u}, \widehat{u} \rangle - (q -2) \varepsilon^{2s} \| \widehat{u} \|^2_{L_2(\Omega)} - C(q) \varepsilon^{2s} \| \widehat{u} \|^{\min(q, 3)}_{L_{q}(\Omega)}.
\end{equation*}
Since $(\widehat{u},\phi_0 )_{L_2(\Omega)} = 0,$ representation \eqref{seminorm_Sp} implies $\langle (-\Delta)_{Sp}^s \widehat{u}, \widehat{u} \rangle \geq \lambda_1^{s} \| \widehat{u} \|^2_{L_2(\Omega)};$ therefore the bounded embedding $\mathcal{H}^s(\Omega) \hookrightarrow L_q(\Omega)$ gives
\begin{multline}
\label{t2_p2_estimate}
\mathcal{J}^{\varepsilon}_{s, q}[u] 
\geq
\left(\lambda_1^s - (q -2) \varepsilon^{2s} \right) \lambda_1^{-s} \langle (-\Delta)_{Sp}^s \widehat{u}, \widehat{u} \rangle - C(q) \varepsilon^{2s} \| \widehat{u} \|^{\min(q, 3)}_{L_q(\Omega)}
\\ \geq
\left[\left(\lambda_1^s - (q -2) \varepsilon^{2s} \right) \lambda_1^{-s} - C(s,q) \varepsilon^{2s} \langle (-\Delta)_{Sp}^s \widehat{u}, \widehat{u} \rangle^{\min(q-2, 1)/2} \right] \langle (-\Delta)_{Sp}^s \widehat{u}, \widehat{u} \rangle.
\end{multline}
For a sufficiently small neighbourhood of the function $\mathbf{1},$ the right-hand side is positive (here $C(s,q)$ is the same as in \eqref{t2_p2_estimate}):
\begin{equation}
\label{t2_p2_constant}
\langle (-\Delta)_{Sp}^s \widehat{u}, \widehat{u} \rangle \leq \| u - \mathbf{1} \|^2_{\mathcal{H}^s(\Omega)} \leq 
\min \left[\frac{1}{2}, \left( \frac{\lambda_1^s - (q -2) \varepsilon^{2s}}{2 C(s,q) \lambda_1^{s} \varepsilon^{2s}} \right)^{2/\min(q-2, 1)} \right].
\end{equation}
In this neighborhood $\mathcal{J}^{\varepsilon}_{s, q}[u] > 0,$ and the statement follows.

\medskip

3. For $\varepsilon = \varepsilon_s(q)$ we have $\mathbf{D}^2 \mathcal{J}^{\varepsilon_s}_{s, q}[\mathbf{1}; \phi_1, \phi_1] = 0.$ We compute the third differential: one has $(\phi_1,\phi_0 )_{L_2(\Omega)} = 0,$ therefore 
\begin{equation*}
\mathbf{D}^3 \mathcal{J}^{\varepsilon_s}_{s, q}[\mathbf{1}; \phi_1, \phi_1, \phi_1]
= - 2 (q-1) (q-2) \varepsilon^{2s} \int\limits_{\Omega} \phi^3_1 \, dx \neq 0.
\end{equation*}
This means that the constant function $\mathbf{1}$ does not give a local minimum to $\mathcal{I}^{\varepsilon_s}_{s, q}[u],$ and therefore is not a global minimum either. The existence of $\breve{\varepsilon}(q)$ follows from the continuous dependence of the functional $\mathcal{I}^{\varepsilon_s}_{s, q}[u]$ on the parameter $\varepsilon.$
\end{proof}
\begin{remark}
In \cite{Per_Per} it was shown that the condition $\int\limits_{\Omega} \phi^3_1 \, dx \neq 0$ is fulfilled for a generic asymmetric domain.
\end{remark}
The following statement is a generalization of Proposition 3.4 in \cite{Naz_Sch}:
\begin{prop}
Let $s < n/2$ (this restriction is essential only for $n=1$), $q = 2^*_{s}$ and $\varepsilon = \varepsilon_s(2^*_{s}).$ Then the constant function $\mathbf{1}$ is not a global minimizer of the functional $\mathcal{I}^{\varepsilon_s}_{s, 2^*_{s}}[u]$ in the cube $(0,1)^n.$
\label{prop_cube}
\end{prop}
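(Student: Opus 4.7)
The plan is to defeat the constant function by exhibiting a family of concentrated test functions. Following the Brezis--Nirenberg / Aubin--Talenti scheme that underlies the local case in \cite{Naz_Sch}, I would take the Talenti extremal $U(x) := c_0(1 + |x|^2)^{-(n-2s)/2}$ for the sharp fractional Sobolev inequality on $\mathbb{R}^n$, normalized so that $\|U\|_{L_{2^*_s}(\mathbb{R}^n)} = 1$ and hence $\|(-\Delta)^{s/2} U\|^2_{L_2(\mathbb{R}^n)} = S_s$ (the sharp Sobolev constant). Set $U_\delta(x) := \delta^{-(n-2s)/2} U(x/\delta)$ and define $u_\delta := \eta\, U_\delta$ restricted to $\Omega := (0,1)^n$, where $\eta$ is a smooth radial cutoff identically $1$ in a small ball around the origin, a corner of $\Omega$.

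The second step is to track all three ingredients of $\mathcal{I}^{\varepsilon_s}_{s, 2^*_s}[u_\delta]$ as $\delta \to 0$. By the radial symmetry of $U_\delta$, the first orthant captures exactly $1/2^n$ of the $L_{2^*_s}$-mass, so
\[
\|u_\delta\|^{2^*_s}_{L_{2^*_s}(\Omega)} = 2^{-n} + o(1), \qquad \|u_\delta\|^2_{L_{2^*_s}(\Omega)} = 2^{-(n-2s)} + o(1),
\]
while the subcritical term $\|u_\delta\|^2_{L_2(\Omega)} = O(\delta^{\min(2s,\, n-2s)})$ (possibly with a logarithm at $4s=n$) vanishes since $s \in (0, n/2)$.

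The main technical point is the spectral seminorm. I would extend $u_\delta$ to the torus $\mathbb{T}^n := (\mathbb{R}/2\mathbb{Z})^n$ by even reflection across each coordinate hyperplane, obtaining $\widetilde u_\delta$. Matching the Neumann cosine basis on $\Omega$ with the periodic Fourier basis on $\mathbb{T}^n$ yields
\[
\langle (-\Delta)_{Sp}^s u_\delta, u_\delta \rangle_\Omega = 2^{-n}\, \langle (-\Delta)^s_{\mathbb{T}^n} \widetilde u_\delta, \widetilde u_\delta \rangle_{\mathbb{T}^n},
\]
where the right-hand side is the spectral fractional Laplacian on the torus. Because of the cutoff, $\widetilde u_\delta$ is a sum of well-separated identical bubbles supported in small balls around $2\mathbb{Z}^n$; for $\delta$ small the central bubble's self-energy dominates, and the torus spectral energy equals $\|(-\Delta)^{s/2}(\eta U_\delta)\|^2_{L_2(\mathbb{R}^n)} + o(1) = S_s + o(1)$, the cross-terms being controlled by the decay $|x-y|^{-n-2s}$ of the Gagliardo kernel. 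Combining gives
\[
\mathcal{I}^{\varepsilon_s}_{s, 2^*_s}[u_\delta] \longrightarrow \frac{2^{-n} S_s}{2^{-(n-2s)}} = 2^{-2s} S_s \quad (\delta \to 0).
\]

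The proof is finished by the universal inequality $2^{-2s} S_s < \varepsilon_s^{2s} = \pi^{2s}(n-2s)/(4s)$, equivalently $S_s < (2\pi)^{2s}(n-2s)/(4s)$; this follows from Lieb's explicit formula for $S_s$ via elementary Gamma-function estimates, and in the local case $s=1$ reduces to the classical Aubin--Talenti bound used in \cite{Naz_Sch}. The hardest part will be the asymptotic evaluation of the spectral seminorm: unlike in the local case, the operator $(-\Delta)_{Sp}^s$ is genuinely non-local and the bubble concentrates at a corner of $\partial\Omega$, so the torus-reflection trick---while the natural way to exploit the cube's symmetry---requires quantitative control of the periodic cross-interactions between bubble copies.
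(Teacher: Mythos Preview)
Your proposal is correct and follows essentially the same route as the paper: concentrate Aubin--Talenti profiles at a corner of the cube, use even reflection to identify the Neumann spectral seminorm on $(0,1)^n$ with $2^{-n}$ times the periodic energy on $(-1,1)^n$, deduce that $\mathcal{I}^{\varepsilon_s}_{s,2^*_s}[u_\delta]\to 2^{-2s}\mathcal{S}_{s,\mathbb{R}^n}$, and then reduce everything to the Gamma-function inequality $\pi^{2s}(n-2s)/(4s)>2^{-2s}\mathcal{S}_{s,\mathbb{R}^n}$, which the paper proves separately in \S3. The only cosmetic difference is that the paper outsources the seminorm asymptotics to an external lemma (Lemma~4 of \cite{Ustinov_Neum}) rather than spelling out the torus argument you sketch.
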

\begin{proof}
It is well-known that $\lambda_1 = \pi^2$ for the cube $(0,1)^n.$ Therefore
\begin{equation*}
\mathcal{I}^{\varepsilon_s}_{s, 2^*_{s}}[\mathbf{1}] = \varepsilon_s^{2s}(2^*_{s}) = \frac{\pi^{2s} (n-2s)}{4s}.
\end{equation*}
Let us construct a test sequence for the functional~$\mathcal{I}^{\varepsilon_s}_{s, 2^*_{s}}[u].$ Recall that the fractional Sobolev inequality holds in $\mathbb{R}^n$ (see \cite[Theorem 1.2, (22)]{Ilin}):
\begin{equation*}
\mathcal{S}_{s, \mathbb{R}^n} := \min \frac{\langle (-\Delta)^s u, u \rangle}{\| u \|^2_{L_{2^*_{s}}(\mathbb{R}^n)}} > 0
\end{equation*}
(here $(-\Delta)^s$ is the standard fractional Laplacian in $\mathbb{R}^n,$ and the minimum is taken over all functions that give finite numerator and denominator) and the minimum is attained (see \cite{Tav}) on the unique function (up to dilations, multiplications by a constant, and translations)
\begin{equation*}
\Phi_{s, a}(x) := \left(a^2 + |x|^2 \right)^{(2s-n)/2} \quad \mbox{with} \quad 
\mathcal{S}_{s, \mathbb{R}^n} = 2^{2s}\pi^{s} \frac{\Gamma\left(n/2+s \right)}{\Gamma\left(n/2 - s \right)} \left[ \frac{\Gamma\left(n/2\right)}{\Gamma\left( n \right)}  \right]^{2s/n}.
\end{equation*}
Using the arguments similar to the proof of the Lemma 4 in \cite{Ustinov_Neum}, one can obtain
\begin{equation*}
\lim_{a \to 0} \mathcal{I}^{\varepsilon_s, (0,1)^n}_{s, 2^*_{s}}[\Phi_{s, a}] = 2^{-2s} \lim_{a \to 0} \mathcal{I}^{\varepsilon_s, (-1,1)^n}_{s, 2^*_{s}}[\Phi_{s, a}] =  2^{-2s} \mathcal{S}_{s, \mathbb{R}^n},
\end{equation*}
and to complete the proof, it suffices to show that 
\begin{equation}
\label{eq_for_cube}
\frac{\pi^{2s} (n-2s)}{4s} > 2^{-2s} \mathcal{S}_{s, \mathbb{R}^n} = \pi^{s} \frac{\Gamma\left((n+2s)/2\right)}{\Gamma\left((n-2s)/2\right)} \left[ \frac{\Gamma\left(n/2\right)}{\Gamma\left( n \right)}  \right]^{2s/n}.
\end{equation}
This inequality is proved in \S 3.
\end{proof}
\begin{remark}
The functional $\mathcal{I}^{\varepsilon}_{s, q}[u]$ depends on the parameters $q$ and $\varepsilon$ continuously. Thus, if $q$ is close to $2^*_{s}$ from below and $\varepsilon$ is close to $\varepsilon_s(q)$ from below, then the constant function~$\mathbf{1}$ does not give a global minimum to $\mathcal{I}^{\varepsilon}_{s, q}[u]$ in the cube $(0,1)^n$ (although it gives a local minimum).
\end{remark}

Now we study the conditions for the functional $\mathcal{I}^{\varepsilon}_{s, q}[u]$ to have the constant function~$\mathbf{1}$ as a global minimizer.
\begin{theorem}
\label{const_min}
\begin{enumerate}
\item For any $q \in (2, 2^*_{s}]$ there exists $\check{\varepsilon}(q) > 0,$ such that for all $\varepsilon < \check{\varepsilon}(q)$ the constant function~$\mathbf{1}$ is a unique global minimizer of the functional $\mathcal{I}^{\varepsilon}_{s, q}[u].$ If $n = 1$ and $s \geq 1/2,$ then the statement remains valid for all $q \in (2, \infty).$
\item For any $\varepsilon > 0$ there exists $\check{q}(\varepsilon) \in (2, 2^*_{s}],$ such that for all $q < \check{q} (\varepsilon)$ the constant function~$\mathbf{1}$ is a unique global minimizer of the functional $\mathcal{I}^{\varepsilon}_{s, q}[u].$ If $n = 1$ and $s \geq 1/2,$ then the statement remains valid with $\check{q}(\varepsilon) < \infty.$
\item Let for the pair $(\varepsilon_0, q_0)$ the constant function $\mathbf{1}$ is not a global minimizer of the functional~$\mathcal{I}^{\varepsilon_0}_{s, q_0}[u].$ Then for any $\varepsilon \geq \varepsilon_0,$ $q \geq q_0$ the constant function $\mathbf{1}$ is not a global minimizer of the functional~$\mathcal{I}^{\varepsilon}_{s, q}[u]$ either.
\end{enumerate}
\end{theorem}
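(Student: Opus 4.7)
The plan is to handle the three parts in the order 3, 1, 2, since Part~3 is a purely algebraic monotonicity statement while Parts~1 and~2 both follow the same contradiction-plus-compactness strategy anchored on the local-minimum estimate~\eqref{t2_p2_estimate} from the proof of Theorem~\ref{local_min}.

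For Part~3, pick $u$ with $\mathcal{J}^{\varepsilon_0}_{s,q_0}[u] < 0$. Since $q_0 > 2$ and $\meas\Omega = 1$, H\"older gives $\| u \|^2_{L_{q_0}} \geq \| u \|^2_{L_2}$, so the inequality reads $\langle(-\Delta)_{Sp}^s u,u\rangle < \varepsilon_0^{2s}(\| u \|^2_{L_{q_0}} - \| u \|^2_{L_2})$, whose right-hand side is nondecreasing in $\varepsilon$; hence the same $u$ certifies $\mathcal{J}^{\varepsilon}_{s,q_0}[u] < 0$ for every $\varepsilon \geq \varepsilon_0$. For monotonicity in $q$, we may assume $u \geq 0$ (by the Remark~3 substitution $u \mapsto |u|$ which only lowers $\mathcal{J}$), and then truncate to $u_M := \min\{u,M\}\in \mathcal{H}^s\cap L_\infty$, which preserves the strict inequality for $M$ large by continuity. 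Since $\meas\Omega = 1$, $q \mapsto \|u_M\|_{L_q}$ is nondecreasing on $[q_0,\infty]$, so $\mathcal{I}^{\varepsilon}_{s,q}[u_M] \leq \mathcal{I}^{\varepsilon}_{s,q_0}[u_M] < \varepsilon^{2s}$ for every $q \geq q_0$.

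For Part~1, argue by contradiction: if no $\check{\varepsilon}(q) > 0$ works, extract $\varepsilon_k \to 0$ and non-constant $u_k \geq 0$ with $\| u_k \|_{L_q} = 1$ and $\mathcal{J}^{\varepsilon_k}_{s,q}[u_k] \leq 0$. This forces $\langle(-\Delta)_{Sp}^s u_k, u_k\rangle \leq \varepsilon_k^{2s}(1 - \| u_k \|^2_{L_2}) \leq \varepsilon_k^{2s} \to 0$; the spectral gap $\lambda_1^s\|\widehat{u}_k\|^2_{L_2} \leq \langle(-\Delta)_{Sp}^s \widehat{u}_k, \widehat{u}_k\rangle$ then gives $\widehat{u}_k \to 0$ in the full $\mathcal{H}^s$-norm, hence in $L_q$ by the continuous embedding (including the critical case $q = 2^*_s$). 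The means $c_k = \int u_k\,dx$ are bounded; along a subsequence $c_k \to c^\ast \geq 0$, and the normalisation $\| u_k \|_{L_q}=1$ yields $c^\ast = 1$, so $u_k \to \mathbf{1}$ strongly in $\mathcal{H}^s$. Feeding $u_k$ into \eqref{t2_p2_estimate}, the bracket $(\lambda_1^s - (q-2)\varepsilon_k^{2s})\lambda_1^{-s} - C(s,q)\varepsilon_k^{2s}\langle(-\Delta)_{Sp}^s\widehat{u}_k,\widehat{u}_k\rangle^{\min(q-2,1)/2}$ tends to $1 > 0$, forcing $\mathcal{J}^{\varepsilon_k}_{s,q}[u_k] > 0$ whenever $\widehat{u}_k \not\equiv 0$; this contradicts the non-constancy of $u_k$ and simultaneously yields uniqueness. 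Part~2 is the mirror argument with $q_k \to 2^+$ and $\varepsilon$ fixed: the passage to the limit in $\| u_k \|_{L_{q_k}}=1$ uses the bound $\mathcal{H}^s \hookrightarrow L_r$ for some fixed $r > 2$ (with $r < 2^*_s$ in the generic case, any finite $r$ in the 1D case $s \geq 1/2$) to supply uniform integrability of $\{|u_k|^{q_k}\}$, after which Vitali's theorem gives $\| u^\ast \|_{L_2}=1$; weak lower semicontinuity and $\mathcal{J}^{\varepsilon}_{s,q_k}[u_k] \leq 0$ force $u^\ast \equiv \mathbf{1}$ and strong $\mathcal{H}^s$-convergence, after which the correction term in \eqref{t2_p2_estimate}, a positive power $(q_k-2)/2$ of an infinitesimal, is $o(1)$.

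The main obstacle is the critical case $q = 2^*_s$, where $\mathcal{H}^s \hookrightarrow L_q$ is not compact; this turns out to be cost-free because our control on $u_k$ lives in the full $\mathcal{H}^s$-norm (the Gagliardo form going to zero forces $\widehat{u}_k \to 0$ in $\mathcal{H}^s$, hence in $L_{2^*_s}$ by continuity of the embedding), while the $c_k$-part is one-dimensional and converges by boundedness alone, so no concentration-compactness analysis is needed. A secondary subtlety is that the constant $C(s,q)$ in \eqref{t2_p2_estimate} must remain bounded as $q \to 2^+$ for Part~2, which is confirmed by inspecting its origin in \cite[(3.5)]{Naz_Sch}.
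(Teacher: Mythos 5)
Your Parts 1 and 3 are in substance the paper's own arguments. In Part 1 you replace the paper's weak-convergence/lower-semicontinuity step by the spectral-gap inequality $\lambda_1^s\|\widehat u_k\|^2_{L_2}\le\langle(-\Delta)_{Sp}^s\widehat u_k,\widehat u_k\rangle$, which gives strong $\mathcal{H}^s$-convergence directly and works at $q=2^*_{s}$ since only continuity (not compactness) of $\mathcal{H}^s(\Omega)\hookrightarrow L_q(\Omega)$ is used; that is fine. In Part 3 the truncation $u_M=\min\{u,M\}$ is an unnecessary detour: the monotonicity $\|u\|_{L_{q_0}(\Omega)}\le\|u\|_{L_q(\Omega)}$ on a measure-one domain requires no boundedness of $u$, and your route would otherwise owe a proof that $\min\{u,M\}\in\mathcal{H}^s(\Omega)$ with $\langle(-\Delta)_{Sp}^s u_M,u_M\rangle\to\langle(-\Delta)_{Sp}^s u,u\rangle$, which is not immediate for the spectral form.

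The genuine gap is the last step of Part 2. You apply \eqref{t2_p2_estimate} at the \emph{varying} exponent $q_k$, where the correction term is $C(s,q_k)\,\varepsilon^{2s}\,a_k^{(q_k-2)/2}$ with $a_k:=\langle(-\Delta)_{Sp}^s\widehat u_k,\widehat u_k\rangle\to 0$, and you dismiss it as ``a positive power of an infinitesimal, hence $o(1)$''. But the exponent $(q_k-2)/2$ also tends to $0$, and $a_k^{b_k}=e^{b_k\ln a_k}$ need not vanish when both $a_k\to0^+$ and $b_k\to0^+$ (take $a_k=\exp(-b_k^{-1/2})$, for which $a_k^{b_k}\to1$). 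All you can guarantee is $a_k^{(q_k-2)/2}\le1$, so the bracket in \eqref{t2_p2_estimate} is only bounded below by $1-C\varepsilon^{2s}+o(1)$, which is useless for large fixed $\varepsilon$; the boundedness of $C(s,q)$ as $q\downarrow2$, your stated safeguard, does not repair this. The paper avoids the problem by \emph{fixing} a single $q\in(2,2^*_{s})$ close enough to $2$ that $\varepsilon<\varepsilon_s(q)$, transferring the hypothesis from $q_k$ to this fixed $q$ via H\"older on the measure-one domain ($\|u_k\|_{L_q(\Omega)}\ge\|u_k\|_{L_{q_k}(\Omega)}=1$, hence $\mathcal{I}^{\varepsilon}_{s,q}[u_k]\le\mathcal{I}^{\varepsilon}_{s,q_k}[u_k]\le\varepsilon^{2s}$), and only then invoking \eqref{t2_p2_estimate} with the fixed positive exponent $\min(q-2,1)/2$, so the correction term really does vanish as $u_k\to\mathbf{1}$. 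Your compactness analysis (interpolation/Vitali to get $\|u_k\|_{L_2(\Omega)}\to1$ and $u_k\to\mathbf{1}$ in $\mathcal{H}^s(\Omega)$) is sound and parallels the paper; only this final application of the quadratic estimate needs to be rerouted through a fixed subcritical exponent.
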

\begin{proof}
1. We argue by contradiction. Suppose that for some $q$ there exist a sequence $\check{\varepsilon}_n \downarrow 0$ and a sequence of non-constant functions $\{u_n \colon \|u_n\|_{L_q(\Omega)} = 1 \},$ such that
\begin{equation}
\label{ineq_un_q_epsn}
\mathcal{I}^{\check{\varepsilon}_n}_{s, q}[u_n] \leq \check{\varepsilon}^{2s}_n, \quad \mbox{therefore} \quad  \langle (-\Delta)_{Sp}^s u_n, u_n \rangle \leq \check{\varepsilon}^{2s}_n \quad \mbox{and} \quad \|u_n\|_{L_{2}(\Omega)} \leq 1.
\end{equation}
From \eqref{ineq_un_q_epsn} it follows that $u_n$ are bounded in $\mathcal{H}^s(\Omega).$ Passing to a subsequence, we can assume that $u_n \rightharpoondown u$ in~$\mathcal{H}^s(\Omega),$ and due to the compactness of embedding $\mathcal{H}^s(\Omega) \hookrightarrow L_2(\Omega)$ one has $u_n \to u$ in $L_{2}(\Omega).$ Since the norm in $\mathcal{H}^s(\Omega)$ is weakly lower semicontinuous, we obtain
\begin{equation*}
\langle (-\Delta)_{Sp}^s u, u \rangle + \|u\|^2_{L_{2}(\Omega)} = \| u \|^2_{\mathcal{H}^s(\Omega)} \leq \lim \limits_{n \to \infty} \| u_n \|^2_{\mathcal{H}^s(\Omega)} =  \|u\|^2_{L_{2}(\Omega)},
\end{equation*}
therefore $\langle (-\Delta)_{Sp}^s u, u \rangle = 0.$ Thus, $u$ is a constant function, and $u_n \to u$ in~$\mathcal{H}^s(\Omega).$ Since $\|u\|_{L_q(\Omega)} = 1,$ then $u = \mathbf{1}.$

Thus, in an arbitrarily small neighbourhood of the function $\mathbf{1}$ there exists a non-constant minimizer $u_n$ of the functional $\mathcal{I}^{\check{\varepsilon}_n}_{s, q}[u].$ For small $\check{\varepsilon}_n,$ the estimate \eqref{t2_p2_constant} of the neighbourhood of the function $\mathbf{1}$ is uniform in $n,$ and $u_n$ fall into this neighborhood. The inequallity \eqref{t2_p2_estimate} entails $\mathcal{J}^{\check{\varepsilon}_n}_{s, q}[u_n] > 0,$ which contradicts to the inequality \eqref{ineq_un_q_epsn}.

\medskip

2. We again argue by contradiction. Suppose that for some $\varepsilon > 0$ there exist a sequence $\check{q}_n \downarrow 2$ and a sequence of non-constant functions $\{u_n \colon \|u_n\|_{L_{\check{q}_n}(\Omega)} = 1 \},$ such that 
\begin{equation*}
\mathcal{I}^{\varepsilon}_{s, \check{q}_n}[u_n] \leq \varepsilon^{2s}, \quad \mbox{therefore} \quad  \langle (-\Delta)_{Sp}^s u_n, u_n \rangle \leq \varepsilon^{2s} \quad \mbox{and} \quad \|u_n\|_{L_{2}(\Omega)} \leq 1.
\end{equation*}
As in the proof of the previous statement, $u_n$ are bounded in $\mathcal{H}^s(\Omega)$ and we can assume that $u_n \rightharpoondown u$ in $\mathcal{H}^s(\Omega).$ We fix some $q \in (2, 2^*_{s});$ due to the compactness of the embedding $\mathcal{H}^s(\Omega) \hookrightarrow L_q(\Omega)$ one has $u_n \to u$ in $L_q(\Omega).$ Obviously, $\check{q}_n < q$ for large $n,$ and the H{\"o}lder inequality $\|u_n\|_{L_{q}(\Omega)} \geq \|u_n\|_{L_{\check{q}_n}(\Omega)}$ gives
\begin{equation}
\label{ineq_un_q_eps}
\mathcal{I}^{\varepsilon}_{s, q}[u_n] \leq \mathcal{I}^{\varepsilon}_{s, \check{q}_n}[u_n] \leq \varepsilon^{2s} = \mathcal{I}^{\varepsilon}_{s, \check{q}_n}[\mathbf{1}] = \mathcal{I}^{\varepsilon}_{s, q}[\mathbf{1}].
\end{equation}
From \eqref{ineq_un_q_eps} we obtain
\begin{equation*}
\lim_{n \to \infty} \langle (-\Delta)_{Sp}^s u_n, u_n \rangle \leq  \varepsilon^{2s} \lim_{n \to \infty} \|u_n\|^2_{L_q(\Omega)} - \varepsilon^{2s} \lim_{n \to \infty} \|u_n\|^2_{L_2(\Omega)} = \varepsilon^{2s} \left( \|u\|^2_{L_q(\Omega)} - \|u\|^2_{L_2(\Omega)} \right),
\end{equation*}
and, passing to the limit in $q \downarrow 2$ on the right-hand side, we get $\langle (-\Delta)_{Sp}^s u, u \rangle = 0.$ Therefore, as in the previous statement, $u_n \to u$ in $\mathcal{H}^s(\Omega)$ and $u = \mathbf{1}.$ 
By choosing $q$ close to 2 we can achieve $\varepsilon < \varepsilon_s(q),$ and in this case the inequality~\eqref{ineq_un_q_eps} contradicts to the inequality \eqref{t2_p2_estimate}.

\medskip

3. Since the constant function $\mathbf{1}$ is not a global minimizer of the functional $\mathcal{I}^{\varepsilon_0}_{s, q_0}[u],$ there exists a non-constant function $u$ such that $\mathcal{I}^{\varepsilon_0}_{s, q_0}[u] < \varepsilon_0^{2s}.$ The required statement follows from the H{\"o}lder inequality:
\begin{gather*}
\mathcal{I}^{\varepsilon_0}_{s, q}[u] \leq \mathcal{I}^{\varepsilon_0}_{s, q_0}[u] < \varepsilon_0^{2s} = \mathcal{I}^{\varepsilon_0}_{s, q}[\mathbf{1}], \\
\mathcal{I}^{\varepsilon}_{s, q}[u] = \mathcal{I}^{\varepsilon_0}_{s, q}[u] + (\varepsilon^{2s} - \varepsilon_0^{2s})  \tfrac{\| u \|^2_{L_2(\Omega)}}{\| u \|^2_{L_q(\Omega)}} < \varepsilon_0^{2s} + (\varepsilon^{2s} - \varepsilon_0^{2s}) = \mathcal{I}^{\varepsilon}_{s, q}[\mathbf{1}].
\qedhere
\end{gather*}
\end{proof}

Based on Theorem \ref{const_min}, we can determine the function $\mathcal{E}_s(q),$ $q \in (2, 2^*_{s}]:$ for $\varepsilon \leq \mathcal{E}_s(q)$ the constant function $\mathbf{1}$ gives a global minimum to the functional $\mathcal{I}^{\varepsilon}_{s, q}[u],$ and for $\varepsilon > \mathcal{E}_s(q)$ the constant function~$\mathbf{1}$ does not give a global minimum to the functional $\mathcal{I}^{\varepsilon}_{s, q}[u].$ Obviously, $\varepsilon_s(q) \geq \mathcal{E}_s(q).$ Moreover, Theorem \ref{const_min} gives that $\mathcal{E}_s(q) > 0,$ $\mathcal{E}_s(q)$ is a non-increasing function, and $\mathcal{E}_s(q) \to \infty$ for $q \downarrow 2.$ Finally, if $n = 1$ and $s \geq \frac{1}{2},$ then $\mathcal{E}_s(q) \to 0$ for $q \to \infty.$

\begin{theorem}
\label{cont_epsilon_th}
The function $\mathcal{E}_s(q)$ is continuous on $(2, 2^*_{s}]$ (on $(2, \infty)$ in case $n = 1,$ $s \geq 1/2$) and strictly decreases.
\end{theorem}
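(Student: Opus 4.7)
The plan is to reduce the theorem to a cleaner variational characterization of $\mathcal{E}_s$. Rewriting $\mathcal{I}^{\varepsilon}_{s,q}[u] \geq \varepsilon^{2s}$ as $\langle (-\Delta)_{Sp}^s u, u\rangle \geq \varepsilon^{2s}(\|u\|^2_{L_q(\Omega)} - \|u\|^2_{L_2(\Omega)})$ and noting that the bracket on the right is strictly positive precisely when $u$ is non-constant (Jensen, $|\Omega|=1$, $q > 2$), I obtain
\begin{equation*}
\mathcal{E}_s^{2s}(q) = \inf_{u \ne \mathrm{const}} F(u,q),
\qquad
F(u,q) := \frac{\langle (-\Delta)_{Sp}^s u, u\rangle}{\|u\|^2_{L_q(\Omega)} - \|u\|^2_{L_2(\Omega)}}.
\end{equation*}
For every fixed non-constant $u$, the map $q \mapsto F(u,q)$ is continuous and strictly decreasing on $(2, 2^*_s]$, because $q \mapsto \|u\|_{L_q(\Omega)}$ is so.

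\textbf{Strict decrease.} I would first show that on each subcritical level $q_0 \in (2, 2^*_s)$ the infimum defining $\mathcal{E}_s^{2s}(q_0)$ is attained. For a minimizing sequence normalized by $\|u_n\|^2_{L_{q_0}} - \|u_n\|^2_{L_2} = 1$, split $u_n = c_n + \widehat{u}_n$: the numerator $\langle (-\Delta)_{Sp}^s \widehat{u}_n, \widehat{u}_n\rangle$ stays bounded, hence so is $\widehat{u}_n$ in $\mathcal{H}^s(\Omega)$ via $\langle (-\Delta)_{Sp}^s \widehat{u}_n, \widehat{u}_n\rangle \geq \lambda_1^s \|\widehat{u}_n\|^2_{L_2}$. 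If $\{c_n\}$ is bounded, compactness of $\mathcal{H}^s(\Omega) \hookrightarrow L_{q_0}(\Omega)$ yields a non-constant limit $u^\ast$ with $F(u^\ast,q_0) = \mathcal{E}_s^{2s}(q_0)$, and $\mathcal{E}_s^{2s}(q_1) \leq F(u^\ast, q_1) < F(u^\ast, q_0) = \mathcal{E}_s^{2s}(q_0)$ for every $q_1 > q_0$. If $c_n \to \infty$, estimate (3.5) of \cite{Naz_Sch} applied to $\widehat{u}_n/c_n$ gives $\|u_n\|^2_{L_{q_0}} - \|u_n\|^2_{L_2} = (q_0-2)\|\widehat{u}_n\|^2_{L_2} + o(1)$, forcing $\mathcal{E}_s^{2s}(q_0) = \lambda_1^s/(q_0-2) = \varepsilon_s^{2s}(q_0)$; strict decrease then follows from the explicit strict monotonicity of $\varepsilon_s$ together with the universal bound $\mathcal{E}_s \leq \varepsilon_s$. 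The endpoint $q = 2^*_s$ is handled by inserting $q^\ast \in (q_0, 2^*_s)$ and using non-increase: $\mathcal{E}_s(q_0) > \mathcal{E}_s(q^\ast) \geq \mathcal{E}_s(2^*_s)$.

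\textbf{Continuity.} Upper semi-continuity of $\mathcal{E}_s^{2s}$ is automatic, since it is the pointwise infimum of the $q$-continuous family $\{F(u,\,\cdot)\}_u$; together with monotonicity this yields left-continuity. For right-continuity (lower semi-continuity) I argue by contradiction: if $q_n \downarrow q_0$ and $\mathcal{E}_s^{2s}(q_n) \to L < \mathcal{E}_s^{2s}(q_0)$, choose $u_n$ with $F(u_n,q_n) \to L$ under the same normalization, and revisit the $c_n$-dichotomy. Bounded $\{c_n\}$ combined with interpolation between $L_2(\Omega)$ and $L_{q'}(\Omega)$ for a fixed $q' \in (q_0, 2^*_s)$ gives $\|u_n\|_{L_{q_n}} \to \|u\|_{L_{q_0}}$ and produces a non-constant limit $u$ with $F(u,q_0) \leq L < \mathcal{E}_s^{2s}(q_0) \leq F(u,q_0)$, a contradiction. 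Unbounded $c_n \to \infty$ forces $F(u_n,q_n) \geq \varepsilon_s^{2s}(q_n) - o(1) \to \varepsilon_s^{2s}(q_0) \geq \mathcal{E}_s^{2s}(q_0)$, again contradicting $L < \mathcal{E}_s^{2s}(q_0)$.

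\textbf{Main obstacle.} The delicate point will be the unbounded-mean regime: one must obtain the asymptotic $\|c_n+\widehat{u}_n\|^2_{L_{q_n}} - \|c_n+\widehat{u}_n\|^2_{L_2} = (q_n-2)\|\widehat{u}_n\|^2_{L_2} + o(1)$ uniformly as $c_n \to \infty$ and $q_n \to q_0$ simultaneously. Estimate (3.5) of \cite{Naz_Sch}, rescaled by $c_n^2$, supplies a remainder of order $c_n^{2-\min(q_n,3)} \|\widehat{u}_n\|^{\min(q_n,3)}_{L_{q_n}}$; the first factor vanishes since $q_n > 2$, while the second is controlled by the uniform $\mathcal{H}^s$ bound on $\widehat{u}_n$ together with continuous dependence of the Sobolev embedding constant on $q_n$. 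In the one-dimensional range $s \geq 1/2$, $q < \infty$, the same scheme applies on any bounded subinterval of $(2,\infty)$, so no separate treatment is needed.
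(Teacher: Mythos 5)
Your proposal is correct, and while its analytic core coincides with the paper's (a compactness dichotomy, the Taylor expansion (3.5) of Nazarov--Scheglova near the constant, and the a priori bound $\mathcal{E}_s \leq \varepsilon_s$), it is organized around a different object. The paper works directly with non-constant minimizers of $\mathcal{I}^{\varepsilon_n}_{s,q_0}$ for $\varepsilon_n \downarrow \mathcal{E}_s(q_0)$ (resp.\ of $\mathcal{I}^{\varepsilon_0}_{s,q_n}$ for $q_n \downarrow q_0$), normalized in $L_{q_0}$, and splits according to whether the weak limit equals $\mathbf{1}$; in the degenerate case it reaches a contradiction with the quantitative local-minimality estimate \eqref{t2_p2_estimate}. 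You instead characterize the threshold as $\mathcal{E}_s^{2s}(q) = \inf F(u,q)$ with $F(u,q) = \langle(-\Delta)^s_{Sp}u,u\rangle / (\|u\|^2_{L_q(\Omega)} - \|u\|^2_{L_2(\Omega)})$, normalize the denominator, and split according to whether the means $c_n$ blow up; in the degenerate case the same expansion forces $\mathcal{E}_s(q_0) = \varepsilon_s(q_0)$, after which strict decrease follows from the explicit monotonicity of $\varepsilon_s$. Your formulation buys two things: monotonicity and upper semicontinuity of $\mathcal{E}_s^{2s}$ (hence left-continuity) are immediate, being an infimum of a family of functions each continuous and strictly decreasing in $q$; and you never need existence of minimizers of $\mathcal{I}^{\varepsilon}_{s,q}$, which the paper tacitly invokes. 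Two small points to tighten, neither a gap: (i) the denominator of $F$ is positive precisely when $|u|$ (not $u$) is non-constant, so the infimum should run over $u$ with $|u|$ non-constant, or over non-negative non-constant $u$ as in the paper's Remark 3 --- sign-changing $u$ with constant modulus never violate $\mathcal{I}^{\varepsilon}_{s,q}[u] \geq \varepsilon^{2s}$, so the threshold is unchanged; (ii) in the unbounded-mean regime only the upper bound of (3.5) is actually used, so the claimed asymptotic equality should be stated as the one-sided estimate $\|u_n\|^2_{L_{q_n}(\Omega)} - \|u_n\|^2_{L_2(\Omega)} \leq (q_n-2)\|\widehat{u}_n\|^2_{L_2(\Omega)} + o(1)$, which is exactly what your uniform control of the remainder delivers and all that the lower bound on the numerator requires.
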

\begin{proof}
We prove the statement of Theorem for $s < n/2.$ For the case $n = 1,$ $s \geq 1/2$ the proof goes without changes.

\medskip

1. First, we show that $\mathcal{E}_s(q)$ is strictly decreasing. Indeed, consider the point $(q_0, \varepsilon_0)$ on the curve $\varepsilon = \mathcal{E}_s(q),$  $q_0 \in (2, 2^*_{s})$. By definition of $\mathcal{E}_s(q),$ for any $\varepsilon_n \downarrow \varepsilon_0$ the minimizers~$u_n$ of functionals~$\mathcal{I}^{\varepsilon_n}_{s, q_0}[u]$ are not constant. We normalize these minimizers with the conditions $\|u_n\|_{L_{q_0}(\Omega)} = 1.$ The following inequalities hold:
\begin{equation}
\label{monot_proof_ineq}
\mathcal{I}^{\varepsilon_n}_{s, q_0}[u_n] < \varepsilon^{2s}_n, \quad \mbox{therefore} \quad  \langle (-\Delta)_{Sp}^s u_n, u_n \rangle < \varepsilon^{2s}_n \quad \mbox{and} \quad \|u_n\|_{L_{2}(\Omega)} < 1.
\end{equation}
As before, $u_n$ are bounded in $\mathcal{H}^s(\Omega),$ there exists a weak limit $u_n \rightharpoondown u_0$ in~$\mathcal{H}^s(\Omega)$ and $u_n \to u_0$ in $L_{q_0}(\Omega)$ and in $L_{2}(\Omega).$ From the weak lower semicontinuity of the norm in $\mathcal{H}^s(\Omega),$ it follows that
\begin{equation*}
\mathcal{I}^{\varepsilon_0}_{s, q_0}[u_0] \leq  \lim\limits_{n \to \infty}\mathcal{I}^{\varepsilon_n}_{s, q_0}[u_n] \leq \lim\limits_{n \to \infty}\varepsilon^{2s}_n = \varepsilon^{2s}_0.
\end{equation*}
There are two possible cases: $u_0 \neq \mathbf{1}$ and $u_0 = \mathbf{1}.$

In the first case, for $q > q_0$ we have $\|u_0\|_{L_q(\Omega)} > \|u_0\|_{L_{q_0}(\Omega)} = 1;$ therefore
\begin{equation*}
\mathcal{I}^{\varepsilon_0}_{s, q}[u_0] < \mathcal{I}^{\varepsilon_0}_{s, q_0}[u_0] \leq \varepsilon^{2s}_0 = \mathcal{I}^{\varepsilon_0}_{s, q}[\mathbf{1}].
\end{equation*}
This means that for $u_0 \neq \mathbf{1}$ and $q > q_0$ the constant function $\mathbf{1}$ does not give a global minimum for the functional $\mathcal{I}^{\varepsilon_0}_{s, q}[u],$ therefore $\mathcal{E}_s(q) < \varepsilon_0 = \mathcal{E}_s(q_0).$

We claim that in the second case the inequality $\varepsilon_0 < \varepsilon_s(q_0)$ is impossible. Indeed, assume the converse. Then, for large~$n$ we have $\varepsilon_0 < \varepsilon_n < \varepsilon_s(q_0),$ and, according to Theorem \ref{local_min}, the constant function $\mathbf{1}$ is a local minimizer of the functional $\mathcal{I}^{\varepsilon_n}_{s, q_0}[u].$ Moreover, 
\begin{equation*}
\lim_{n \to \infty} \langle (-\Delta)_{Sp}^s u_n, u_n \rangle \leq \lim_{n \to \infty}  \varepsilon_n^{2s} \left( \|u_n\|^2_{L_{q_0}(\Omega)} - \|u_n\|^2_{L_2(\Omega)} \right) = \varepsilon_0^{2s} \left( \|\mathbf{1}\|^2_{L_{q_0}(\Omega)} - \|\mathbf{1}\|^2_{L_2(\Omega)} \right) = 0;
\end{equation*}
therefore $\langle (-\Delta)_{Sp}^s u, u \rangle = 0$ and $u_n \to \mathbf{1}$ in $\mathcal{H}^s(\Omega).$ Together with \eqref{monot_proof_ineq} this gives a contradiction, similarly to the proof of Theorem  \ref{const_min}.

Since $\varepsilon_0 = \mathcal{E}_s(q_0) \leq \varepsilon_s(q_0),$ the only possibility is $\varepsilon_0 = \varepsilon_s(q_0),$ and the strict monotonicity follows from the inequalities
\begin{equation*}
\mathcal{E}_s(q) \leq \varepsilon_s(q) < \varepsilon_s(q_0) =  \varepsilon_0 = \mathcal{E}_s(q_0).
\end{equation*}

\medskip

2. As a second step, we show that the monotone function $\mathcal{E}_s(q)$ is left continuous. To do this, consider again a point $(q_0, \varepsilon_0)$ on the curve $\varepsilon = \mathcal{E}_s(q),$ $q_0 \in (2, 2^*_{s}]:$ for~$\varepsilon > \varepsilon_0$ the constant function $\mathbf{1}$ is not a global minimizer of the functional $\mathcal{I}^{\varepsilon}_{s, q_0}[u].$ By the continuity of $L_q$-norm with respect to $q$ sufficiently close to $q_0$ from below, we see that the constant function $\mathbf{1}$ is not a global minimizer of the functional $\mathcal{I}^{\varepsilon}_{s, q}[u].$ This gives the continuity of $\mathcal{E}_s(q)$ on the left.

\medskip

3. It remains to show the right continuity of the function $\mathcal{E}_s(q).$ Let $q_0 \in (2, 2^*_{s})$ and let $\varepsilon_0 = \lim \limits_{q \downarrow q_0}  \mathcal{E}_s(q).$ Due to the monotonicity of the function $\mathcal{E}_s(q),$ for any $q_n \downarrow q_0$ minimizers~$u_n$ of the functionals~$\mathcal{I}^{\varepsilon_0}_{s, q_n}[u]$ are not constant. As before, we normalize these minimizers with the condition $\|u_n\|_{L_{q_n}(\Omega)} = 1.$ The following inequalities hold:
\begin{equation*}
\mathcal{I}^{\varepsilon_0}_{s, q_n}[u_n] < \varepsilon_0^{2s}, \quad \mbox{therefore} \quad  \langle (-\Delta)_{Sp}^s u_n, u_n \rangle < \varepsilon_0^{2s} \quad \mbox{and} \quad \|u_n\|_{L_{2}(\Omega)} \leq 1.
\end{equation*}
We repeat the reasoning from the proof of the second part of Theorem \ref{const_min}: $u_n$ are bounded in~$\mathcal{H}^s(\Omega)$ and there exists a weak limit $u_n \rightharpoondown u_0$ in~$\mathcal{H}^s(\Omega).$ For any fixed $q < 2^*_{s}$ one has $u_n \to u_0$ in $L_q(\Omega),$ and the H{\"o}lder inequality for $q_0 < q_n \leq q$ gives $\|u_n\|_{L_{q}(\Omega)} \geq \|u_n\|_{L_{q_n}(\Omega)} = 1;$ therefore
\begin{multline*}
\langle (-\Delta)_{Sp}^s u_0, u_0 \rangle 
\leq 
\lim_{n \to \infty} \langle (-\Delta)_{Sp}^s u_n, u_n \rangle
\leq
\varepsilon_0^{2s} \lim_{n \to \infty} \|u_n\|^2_{L_q(\Omega)} - \varepsilon_0^{2s} \lim_{n \to \infty} \|u_n\|^2_{L_2(\Omega)}
\\=
\varepsilon_0^{2s} \left( \|u_0\|^2_{L_q(\Omega)} - \|u_0\|^2_{L_2(\Omega)} \right).
\end{multline*}
Passing to the limit in $q\downarrow q_0$ we obtain $\mathcal{I}^{\varepsilon_0}_{s, q_0}[u_0] \leq \varepsilon_0^{2s}.$ Moreover, since 
\begin{gather*}
\|u_0\|_{L_{q}(\Omega)} = \lim_{n \to \infty} \|u_n\|_{L_{q}(\Omega)} \geq \lim_{n \to \infty} \|u_n\|_{L_{q_n}(\Omega)} = 1, \\
\|u_0\|_{L_{q_0}(\Omega)} = \lim_{n \to \infty} \|u_n\|_{L_{q_0}(\Omega)} \leq \lim_{n \to \infty} \|u_n\|_{L_{q_n}(\Omega)} = 1,
\end{gather*}
passing to the limit in $q \downarrow q_0$ we obtain $\|u_0\|_{L_{q_0}(\Omega)} = 1.$ Here, as in the proof of the strict monotonicity of the function $\mathcal{E}_s(q),$ there are two cases: $u_0 \neq \mathbf{1}$ and $u_0 = \mathbf{1}.$

In the first case, for any $\varepsilon > \varepsilon_0$ we get from the estimate $\|u_0\|_{L_{2}(\Omega)} < \|u_0\|_{L_{q_0}(\Omega)} = 1$
\begin{equation*}
\mathcal{I}^{\varepsilon}_{s, q_0}[u_0] = \mathcal{I}^{\varepsilon_0}_{s, q_0}[u_0] + (\varepsilon^{2s} - \varepsilon_0^{2s}) \frac{\| u_0 \|^2_{L_2(\Omega)}}{\| u_0 \|^2_{L_{q_0}(\Omega)}} < \varepsilon_0^{2s} + (\varepsilon^{2s} - \varepsilon_0^{2s}) = \mathcal{I}^{\varepsilon}_{s, q_0}[\mathbf{1}],
\end{equation*}
which gives $\varepsilon_0 \geq \mathcal{E}_s(q_0).$ Since the function $\mathcal{E}_s(q)$ decreases, then $\varepsilon_0 \leq \mathcal{E}_s(q_0)$ and therefore $\varepsilon_0 = \mathcal{E}_s(q_0).$

In the second case $u_n \to \mathbf{1}$ in $\mathcal{H}^s(\Omega),$ and for $q > q_0$ from $\|u_n\|_{L_{q}(\Omega)} \geq \|u_n\|_{L_{q_n}(\Omega)}$ we get
\begin{equation*}
\mathcal{I}^{\varepsilon_0}_{s, q}[u_n] \leq \mathcal{I}^{\varepsilon_0}_{s, q_n}[u_n] < \varepsilon_0^{2s} = \mathcal{I}^{\varepsilon_0}_{s, q_n}[\mathbf{1}] = \mathcal{I}^{\varepsilon_0}_{s, q}[\mathbf{1}],
\end{equation*}
which gives $\varepsilon_0 \geq \varepsilon_s(q).$ Passing to the limit in $q\downarrow q_0$ we obtain $\varepsilon_0 \geq \varepsilon_s(q_0) \geq \mathcal{E}_s(q_0).$ Analogously to the first case we get 
$\varepsilon_0 = \mathcal{E}_s(q_0),$ since the function $\mathcal{E}_s(q)$ decreases.
\end{proof}
\begin{remark}
In \cite{Naz2} it was shown that for $n=1,$ $s \geq 1$ the equality $\varepsilon_s(q) = \mathcal{E}_s(q)$ holds for all $q > 2,$ and it was also conjectured that this statement remains valid for all $s \geq 1/2.$ This is still an open problem, but Proposition \ref{prop_cube} shows that for $s < 1/2$ the equality $\varepsilon_s(q) = \mathcal{E}_s(q)$ does not hold for $q,$ close to $2^*_{s}$ from below.
\end{remark}

\section{Proof of the inequality \eqref{eq_for_cube}}
\begin{proof}
Recall that $n>2s$ and rewrite the inequality \eqref{eq_for_cube} as
\begin{equation}
\label{main_cube_ineq}
\mathcal{A}_n := \frac{2s \Gamma\left((n+2s)/2\right)}{\Gamma\left((n-2s+2)/2\right)} \left[ \frac{\Gamma\left(n/2\right)}{\Gamma\left( n \right)}  \right]^{2s/n} < \pi^{s}.
\end{equation}
Let us prove that $\mathcal{A}_{n+2} < \mathcal{A}_{n},$ i.e.
\begin{equation*}
\frac{\mathcal{A}_{n+2}}{\mathcal{A}_n}
 = 
\frac{n+2s}{n-2s+2} \left[ \frac{\Gamma\left( n \right)}{\Gamma\left( n/2 \right) \left[ 2 (n+1)\right]^{n/2} }  \right]^{4s/(n(n+2))} < 1.
\end{equation*}
By raising to a power, we obtain the equivalent inequality
\begin{equation}
\label{arb_s_cube_ineq}
\left[\frac{n+2s}{n-2s+2}\right]^{n(n+2)/(4s)} \frac{\Gamma\left( n \right)}{\Gamma\left( n/2 \right)  \left[ 2 (n+1)\right]^{n/2}} < 1.
\end{equation}
Let us show that the function $f(s) = \left[\tfrac{n+2s}{n-2s+2}\right]^{n(n+2)/(4s)}$ monotonically increases as $s \in [0,1].$ Its logarithmic derivative is equal to
\begin{multline*}
\frac{d}{ds}\left[ \frac{1}{s} \ln\left(\frac{n+2s}{n-2s+2}\right) \right] 
=
-\frac{1}{s^2} \ln\left(\frac{n+2s}{n-2s+2}\right) + \frac{2(n-2s+2)(2n+2)}{s(n+2s)(n-2s+2)^2} 
\\=
\frac{1}{s} \left[\frac{4n+4}{(n+2s)(n-2s+2)} - \frac{1}{s} \ln\left(\frac{n+2s}{n-2s+2}\right)\right]
\\ \geq
\frac{1}{s} \left[\frac{4n+4}{(n+2s)(n-2s+2)} - \frac{4s - 2}{s (n-2s+2)}\right]
=
\frac{2}{s(n-2s+2)} \left[\frac{2n+2}{n+2s} - \frac{2s - 1}{s}\right]
\\=
\frac{2n + 8s -  8s^2}{s^2(n-2s+2)(n+2s)} \geq 0.
\end{multline*}
Thus, it suffices to prove the inequality \eqref{arb_s_cube_ineq} for $s=1,$ i.e. the inequality
\begin{equation*}
\frac{\Gamma\left( n/2 \right)}{\Gamma\left( n \right)}  \left[ 2 (n+1)  \left(\frac{n}{n+2}\right)^{(n+2)/2}  \right]^{n/2} > 1.
\end{equation*}
For $n = 1$ this inequality is obvious. For $n \geq 2$ we prove a stronger inequality
\begin{equation*}
\frac{\Gamma\left( n/2 \right)}{\Gamma\left( n \right)} \left[ 2 (n+1)  \left(\frac{n}{n+2}\right)^{(n+2)/2}  \right]^{n/2} \geq 1 + \frac{2}{n+2}
\end{equation*}
which is equivalent to the inequality
\begin{equation}
\mathcal{B}_n := \frac{ (n+2) \Gamma\left( n/2 \right)}{ (n+4) \Gamma\left( n \right)} \left[ 2 (n+1)  \left(\frac{n}{n+2}\right)^{(n+2)/2}  \right]^{ n/2} \geq 1.
\label{final_cube_ineq}
\end{equation}
Direct calculations show that $\mathcal{B}_2 = 1$ and $\mathcal{B}_3 \geq 1,05.$  We compute the ratio $\mathcal{B}_{n+2} / \mathcal{B}_n:$
\begin{multline*}
\frac{\mathcal{B}_{n+2}}{\mathcal{B}_n}
=
\frac{ (n+4)^2}{2 (n+1) (n+2) (n+6)}  \left[ 2 (n+3)  \left(\frac{n+2}{n+4}\right)^{(n+4)/2}  \right]^{(n+2)/2}
\\ \times
\left[ 2 (n+1)  \left(\frac{n}{n+2}\right)^{(n+2)/2}  \right]^{-n/2}
\\=
\frac{ (n+4)^2}{(n+2) (n+6)}  
\left[ \frac{(n+3)(n+2)^2}{(n+1)(n+4)^2} \left(\frac{(n+2)^2}{n^2+4n}\right)^{n/2} \right]^{(n+2)/2} =: g(n).
\end{multline*}
Let us show that the function $g(x)$ decreases monotonically as $x \geq 2.$ Its logarithmic derivative is equal to
\begin{multline*}
\mathcal{D} := \frac{d}{dx}\left[ 
\ln \left(\frac{ (x+4)^2}{(x+2) (x+6)} \right) + \frac{x+2}{2} \bigl[
\ln \left(\frac{x+3}{x+1} \right) - 2 \ln \left(\frac{x+4}{x+2}\right) + \frac{x}{2} \ln \bigl(\frac{(x+2)^2}{x^2+4x}\bigl)
\bigr] \right]
\\=
-\frac{8}{(x+2) (x+4) (x+6)}
+
\frac{1}{2} \left[ \ln \left(1 + \frac{2}{(x+1)(x+4)} \right) - \ln \left( 1 + \frac{2}{x+2}\right) \right]
\\+
\frac{x+1}{2} \ln \left(1 + \frac{4}{x^2+4x}\right)
-
\frac{x+2}{(x+1)(x+3)} .
\end{multline*}
Since for $t \in (0, 1)$ one has the estimates
\begin{equation*}
t > t - \frac{t^2}{2} + \frac{t^3}{3}  > \ln(1 + t) > t - \frac{t^2}{2},
\end{equation*}
for $x \geq 2$ we have (put $y := x - 2$)
\begin{multline*}
\mathcal{D} \leq
- \frac{8}{(x+2) (x+4) (x+6)} + \frac{1}{(x+1)(x+4)} - \frac{x+1}{(x+2)^2}
\\+ \frac{x+1}{2}   \left( \frac{4}{x^2+4x} - \frac{8}{(x^2+4x)^2} +  \frac{64}{3(x^2+4x)^3} \right) 
- \frac{x+2}{(x+1)(x+3)}
\\=
-\frac{18x^8 + 219x^7+ 910x^6 + 1236x^5 - 968x^4 - 4080x^3 - 5024x^2 - 4608x - 2304}{3x^3(x+1)(x+2)^2(x+3)(x+4)^3(x+6)}
\\=
-\frac{18y^8 + 507y^7+ 5992y^6 + 38616y^5 + 147472y^4 + 338112y^3 + 443968y^2 + 285504y + 50688}{3(y+5)^3(y+6)(y+7)^2(y+8)(y+9)^3(y+10)}
\\ < 0.
\end{multline*}
From the Bernoulli inequality $(1 + t)^m \geq 1 + mt$ for $t > -1,$ we obtain
\begin{gather*}
\left(\frac{(n+2)^2}{n^2+4n}\right)^{n/2} = 
\left(1 + \frac{4}{n^2+4n}\right)^{n/2} \geq \frac{n+6}{n+4}; \\
\left[ \frac{(n+3)(n+2)^2(n+6)}{(n+1)(n+4)^3} \right]^{(n+2)/2} 
= 
\left[1 - \frac{2(n^2 + 2n -4)}{(n+1)(n+4)^3}  \right]^{(n+2)/2}
\geq
1 - \frac{(n^2 + 2n -4)(n+2)}{(n+1)(n+4)^3},
\end{gather*}
which gives 
\begin{multline*}
\lim\limits_{n \to \infty} g(n) 
\geq 
\lim\limits_{n \to \infty} \frac{ (n+4)^2}{(n+2) (n+6)}  
\left[ \frac{(n+3)(n+2)^2(n+6)}{(n+1)(n+4)^3} \right]^{(n+2)/2}
\\ \geq 
\lim\limits_{n \to \infty} \left[1 - \frac{(n^2 + 2n -4)(n+2)}{(n+1)(n+4)^3} \right] = 1.
\end{multline*}
Thus, the function $g(n)$ decreases monotonically to one and therefore we have $g(n) >1.$ This proves the inequality \eqref{final_cube_ineq}, which implies the inequality \eqref{arb_s_cube_ineq}, and, therefore $\mathcal{A}_{n+2} < \mathcal{A}_{n}.$ 

To complete the proof, it remains to show that \eqref{main_cube_ineq} is valid for $n \leq 3$ (in the case $n = 1$ here is an additional constraint $2s < 1,$ thus \eqref{main_cube_ineq} for $n=3,$ $2s \geq 1$ should be checked separately). For $n = 1$ the inequality \eqref{main_cube_ineq} takes the form (since $\Gamma\left( 1/2 \right) = \sqrt{\pi}$)
\begin{equation*}
\frac{2s  \Gamma\left((1+2s)/2\right)}{\Gamma\left( (3-2s)/2 \right)} < 1.
\end{equation*}
This inequality follows from the estimate $\Gamma\left( (3+2s)/2\right) \leq \Gamma\left(2\right) \leq \Gamma\left( (5-2s)/2 \right):$
\begin{equation*}
\frac{2s  \Gamma\left((1+2s)/2\right)}{\Gamma\left((3-2s)/2\right)}
=
\frac{2s (3 - 2s)  \Gamma\left((3+2s)/2\right)}{(1 + 2s)  \Gamma\left((5-2s)/2\right)} 
=
\left(1 - \frac{(1-2s)^2}{1 + 2s} \right)  \frac{\Gamma\left((3+2s)/2\right)}{\Gamma\left((5-2s)/2\right)}
<
 1.
\end{equation*}
For $n = 2$ the inequality \eqref{main_cube_ineq} is written as
\begin{equation*}
\frac{2s  \Gamma\left(1+s\right)}{\Gamma\left(2-s\right)}  < \pi^{s}.
\end{equation*}
Using the Euler reflection formula $\Gamma\left(z\right) \Gamma\left(1-z\right) = \pi/\sin(\pi z)$ for non-integer $z,$ we obtain
\begin{equation*}
2 \Gamma^2\left(1+s\right)  \sin(\pi s) < \pi^{1+s}  (1-s),
\end{equation*}
and this inequality follows from the following estimates (since $\Gamma\left(1+s\right) \leq 1$):
\begin{gather*}
2 \Gamma^2\left(1+s\right)  \sin(\pi s) < 2 < \pi^{1+s}  (1-s) \quad \mbox{for $s \in (0, \, 0,7]$}; \\
2 \Gamma^2\left(1+s\right)  \sin(\pi s) < 2 \sin(\pi(1-s)) < 2 \pi  (1-s) < \pi^{1+s}  (1-s) \quad \mbox{for $s \in [0,7,\, 1)$}.
\end{gather*}
For $n=3$ the inequality (\ref{main_cube_ineq}) is written as
\begin{equation*}
\frac{2s  \Gamma\left( (3+2s)/2 \right)}{\Gamma\left( (5-2s)/2 \right)} \left[ \frac{\sqrt{\pi}}{4}  \right]^{2s/3} < \pi^{s},
\end{equation*}
or, equivalently
\begin{equation*}
2s  \Gamma\left((3+2s)/2\right) < (4\pi)^{2s/3} \Gamma\left( (5-2s)/2 \right).
\end{equation*}
The validity of this inequality follows from the estimate
\begin{equation*}
2s  \Gamma\left((3+2s)/2\right) < 2s  \Gamma\left( 5/2 \right) = 3s  \Gamma\left( 3/2 \right) < (4\pi)^{2s/3}  \Gamma\left(3/2\right) < (4\pi)^{2s/3}  \Gamma\left( (5-2s)/2 \right). \qedhere
\end{equation*}
\end{proof}

The author is very grateful to A.I. Nazarov for posing the problem and for valuable discussions of the results, and also to A.P. Scheglova for useful remarks that made it possible to improve the text of the work.

\end{document}